\documentclass[11pt, leqno]{amsart}
\usepackage{amsmath,amssymb,txfonts}
\usepackage{amssymb}
\usepackage{amsxtra}
\usepackage{amsthm, color}
\usepackage{txfonts}
\usepackage{graphicx}
\usepackage{times}
\usepackage{tikz, hyperref}
\usetikzlibrary{calc}
\usepackage{pgfplots}
\usepackage{citeref}
\usepackage{pgfplots}
\usepackage{tikz-3dplot}
\numberwithin{equation}{section}

\usepackage[cp1252]{inputenc}
\usepackage{bbm}
\usepackage{mathrsfs}
\usepackage{graphicx}

\usepackage[active]{srcltx}

\usepackage{graphicx}

\newtheorem{prop}{Proposition}[section]
\newtheorem{theorem}[prop]{Theorem}
\newtheorem{lemma}[prop]{Lemma}

\usepackage{tikz}
\usetikzlibrary{calc}

\makeatletter

\def\hyper@x#1,#2\relax{#1}
\def\hyper@y#1,#2\relax{#2}
\def\hyper@coords#1{#1}

\newif\ifhyper@vertical

\def\hyper@computer#1#2{%
	\edef\hyper@toscan{(#1)}
	\tikz@scan@one@point\hyper@coords\hyper@toscan
	\edef\hyper@sx{\the\pgf@x}
	\edef\hyper@sy{\the\pgf@y}
	\edef\hyper@toscan{(#2)}
	\tikz@scan@one@point\hyper@coords\hyper@toscan
	\edef\hyper@ex{\the\pgf@x}
	\edef\hyper@ey{\the\pgf@y}
	\pgfmathsetmacro{\hyper@mx}{(\hyper@ex + \hyper@sx)/2}
	\pgfmathsetmacro{\hyper@my}{(\hyper@ey + \hyper@sy)/2}
	\pgfmathsetmacro{\hyper@dx}{\hyper@ex - \hyper@sx}
	\pgfmathparse{\hyper@dx == 0 ? "\noexpand\hyper@verticaltrue" : "\noexpand\hyper@verticalfalse"}
	\pgfmathresult
	\ifhyper@vertical
	\edef\hyper@cmd{-- (\tikztotarget)}
	\else
	\pgfmathsetmacro{\hyper@dy}{\hyper@ey - \hyper@sy}
	\pgfmathsetmacro{\hyper@t}{\hyper@my/\hyper@dx}
	\pgfmathsetmacro{\hyper@cx}{\hyper@mx + \hyper@t * \hyper@dy}
	\pgfmathsetmacro{\hyper@radius}{veclen(\hyper@cx - \hyper@sx, \hyper@sy)}
	\pgfmathsetmacro{\hyper@sangle}{180 - atan2(\hyper@sy,\hyper@cx-\hyper@sx)}
	\pgfmathsetmacro{\hyper@eangle}{180 - atan2(\hyper@ey,\hyper@cx-\hyper@ex)}
	\edef\hyper@cmd{arc[radius=\hyper@radius pt, start angle=\hyper@sangle, end angle=\hyper@eangle]}
	\fi
}

\def\hyper@disc@computer#1#2{%
	\edef\hyper@toscan{(#1)}
	\tikz@scan@one@point\hyper@coords\hyper@toscan
	\edef\hyper@sx{\the\pgf@x}
	\edef\hyper@sy{\the\pgf@y}
	\edef\hyper@toscan{(#2)}
	\tikz@scan@one@point\hyper@coords\hyper@toscan
	\edef\hyper@ex{\the\pgf@x}
	\edef\hyper@ey{\the\pgf@y}
	\pgfmathsetmacro{\hyper@det}{\hyper@sx * \hyper@ey - \hyper@sy * \hyper@ex}
	\pgfmathparse{\hyper@det == 0 ? "\noexpand\hyper@verticaltrue" : "\noexpand\hyper@verticalfalse"}
	\pgfmathresult
	\ifhyper@vertical
	\edef\hyper@cmd{-- (\tikztotarget)}
	\else
	\pgfmathsetmacro{\hyper@mx}{(\hyper@ex + \hyper@sx)/2}
	\pgfmathsetmacro{\hyper@my}{(\hyper@ey + \hyper@sy)/2}
	\pgfmathsetmacro{\hyper@dx}{\hyper@ex - \hyper@sx}
	\pgfmathsetmacro{\hyper@dy}{\hyper@ey - \hyper@sy}
	\pgfmathsetmacro{\hyper@dradius}{\pgfkeysvalueof{/tikz/hyperbolic disc radius}}
	\pgfmathsetmacro{\hyper@t}{((\hyper@dradius)^2 - \hyper@sx * \hyper@ex - \hyper@sy * \hyper@ey)/(2 * (\hyper@sx * \hyper@ey - \hyper@sy * \hyper@ex))}
	\pgfmathsetmacro{\hyper@radius}{sqrt((\hyper@t)^2 + .25) * veclen(\hyper@dx,\hyper@dy)}
	\pgfmathsetmacro{\hyper@cx}{\hyper@mx + \hyper@t * \hyper@dy}
	\pgfmathsetmacro{\hyper@cy}{\hyper@my - \hyper@t * \hyper@dx}
	\pgfmathsetmacro{\hyper@sangle}{atan2(\hyper@sy-\hyper@cy,\hyper@sx - \hyper@cx)}
	\pgfmathsetmacro{\hyper@eangle}{atan2(\hyper@ey-\hyper@cy,\hyper@ex - \hyper@cx)}
	\pgfmathsetmacro{\hyper@eangle}{\hyper@eangle > \hyper@sangle + 180 ? \hyper@eangle - 360 : \hyper@eangle}
	\edef\hyper@cmd{arc[radius=\hyper@radius pt, start angle=\hyper@sangle, end angle=\hyper@eangle]}
	\fi
}

\def\hyper@plane@tangent#1#2{%
	\edef\hyper@toscan{(#1)}
	\tikz@scan@one@point\hyper@coords\hyper@toscan
	\edef\hyper@sx{\the\pgf@x}
	\edef\hyper@sy{\the\pgf@y}
	\edef\hyper@toscan{(#2)}
	\tikz@scan@one@point\hyper@coords\hyper@toscan
	\edef\hyper@ex{\the\pgf@x}
	\edef\hyper@ey{\the\pgf@y}
	\pgfmathsetmacro{\hyper@ex}{\hyper@ex - \hyper@sx}
	\pgfmathsetmacro{\hyper@ey}{\hyper@ey - \hyper@sy}
	\pgfmathparse{\hyper@ex == 0 ? "\noexpand\hyper@verticaltrue" : "\noexpand\hyper@verticalfalse"}
	\pgfmathresult
	\ifhyper@vertical
	\pgfmathsetmacro{\hyper@d}{\hyper@ey/1cm}
	\pgfmathsetmacro{\hyper@radius}{\hyper@sy * exp(\hyper@d) - \hyper@sy}
	\edef\hyper@cmd{-- ++(0,\hyper@radius pt)}
	\else
	\pgfmathsetmacro{\hyper@d}{\hyper@ex > 0 ? veclen(\hyper@ex,\hyper@ey) : -veclen(\hyper@ex,\hyper@ey)}
	\pgfmathsetmacro{\hyper@radius}{abs(\hyper@sy * \hyper@d / \hyper@ex)}
	\pgfmathsetmacro{\hyper@sangle}{90 + atan(\hyper@ey/\hyper@ex)}
	\pgfkeysgetvalue{/tikz/hyperbolic plane target angle}{\hyper@eangle}
	\ifx\hyper@eangle\pgfutil@empty
	\pgfmathsetmacro{\hyper@d}{\hyper@d/1cm}
	\pgfmathsetmacro{\hyper@ey}{\hyper@ey/1cm}
	\pgfmathsetmacro{\hyper@tanhd}{tanh(\hyper@d)}
	\pgfmathsetmacro{\hyper@eangle}{acos((\hyper@d * \hyper@tanhd - \hyper@ey)/(\hyper@d - \hyper@ey * \hyper@tanhd))}
	\fi
	\edef\hyper@cmd{arc[radius=\hyper@radius pt, start angle=\hyper@sangle, end angle=\hyper@eangle]}
	\fi
}

\tikzset{%
	hyperbolic disc radius/.initial={1cm},
	hyperbolic plane/.style={
		to path={
			\pgfextra{\hyper@computer\tikztostart\tikztotarget}
			\hyper@cmd
		}
	},
	hyperbolic plane tangent/.style={
		to path={
			\pgfextra{\hyper@plane@tangent\tikztostart\tikztotarget}
			\hyper@cmd
		}
	},
	hyperbolic disc/.style={
		to path={
			\pgfextra{\hyper@disc@computer\tikztostart\tikztotarget}
			\hyper@cmd
		}
	},
	hyperbolic plane target angle/.initial={},
}

\makeatother

\begin{document}

	\title[Heat dispersion laws in smooth compact manifolds]{Heat dispersion laws in smooth compact manifolds}
	
	\author{\tiny Xiaoshang Jin}
	\address{School of Mathematics and Statistics, Huazhong University of Science and Technology, Wuhan, Hubei 430074, China}
	\email{jinxs@hust.edu.cn}
	\author{\tiny Jie Xiao}
	\address{Department of Mathematics \& Statistics,
		Memorial University, St. John's, NL A1C 5S7, Canada}
	\email{jxiao@mun.ca}

	\keywords{}

	\begin{abstract}
		Given a Lipschitz conductor $K$ in the smooth compact Riemannian $2\le n$-manifold $(M,g)$, such a half generic heat dispersion law
		$$
	{\rm H^d}_{p,\varPhi,\varPsi}(K,M)=2^{-1}	{\rm H^d}_{\Delta_p,\varPhi,\varPsi}(K,M)
$$
is not only newly-established via Theorem \ref{t11} but also deeply-explored through not only Proposition \ref{p32} (a comparison law for the generic heat dispersion) but also Proposition \ref{p31} (a recycling law for the quasilinear Laplace-Robin eigenvalue).

	\end{abstract}

	\thanks{The first-named \& second-named authors were supported by {``the Fundamental Research Funds for the Central Universities'', HUST: \# 2025BRSXB002.} \& NSERC of Canada \# 202979 respectively.}
	
	\subjclass[2010]{35B45, 53C21, 74G65}
	\date{}

\date{}

%\dedicatory{In memory of Adriano M. Garsia 1928-2010}

%\dedicatory{Dedicated to Adriano M. Garsia who surely appreciated a simplified approach}

\keywords{}

\maketitle

\tableofcontents

\section{Introduction}\label{s1}
\setcounter{equation}{0}

First of all, acccording to \cite[Theorem 5.3]{Br2} (cf. \cite[Theorem 1-Corollary 2-Theorem 3]{Br1} \& \cite{Br3}), if $\Sigma$ is a compact $2\le n$-dimensional hypersurface in the $(n+1)$-dimensional Euclidean space $\mathbb R^{n+1}$, possibly with boundary $\partial \Sigma$, then
\begin{equation}
\label{e10}
\begin{cases}
 n|\mathbb B^{n}|^\frac1n\left(\int_{\Sigma}f^\frac{n}{n-1}\right)^\frac{n-1}{n}\le\int_{\Sigma}\Big(|\nabla^\Sigma f|^2+|\mathsf{H}|^2f^2\Big)^\frac12+\int_{\partial \Sigma}f\\
\forall\ \text{smooth $f>0$ on $\Sigma$ with its surface gradient}\\
\nabla^\Sigma f=\nabla f-(\nabla f\cdot{\bf n})\bf{n},
\end{cases}
\end{equation}
with \eqref{e10}'s equality holding iff not only $f$ is a constant but aslo $\Sigma$ is a flat disk,
where not only $\mathsf{H}$ is the mean curvature of $\Sigma$ but also $|\mathbb B^n|$ denotes the volume of the open unit ball $\mathbb B^n$ in $\mathbb R^n$ as well as ${\bf n}$ is the unit outer normal to $\Sigma$ with
$$
\nabla^\Sigma\cdot{\bf n}=\nabla\cdot{\bf n}=\mathsf{H}.
$$
Recall that within the study of a capillary surface which represents the interface between two different fluids, the gradient of spatially varying surface tension makes no much sense, yet the surface gradient $\nabla^\Sigma$ does via serving certain purposes from mathematical fluid mechanics. So, it is worth mentioning two special assertions.
\begin{itemize}
	\item \eqref{e10}, plus the elementry inequality
	$$
	(a^2+b^2)^\frac12\le a+b\ \ \forall\ \ (a,b)\in [0,\infty)^2,
	$$
	derives
\begin{equation}
\label{e100}
n|\mathbb B^{n}|^\frac1n\left(\int_{\Sigma}f^\frac{n}{n-1}\right)^\frac{n-1}{n}\le \int_{\Sigma}\Big(|\nabla^\Sigma f|+|\mathsf{H}|f\Big)\,+\int_{\partial \Sigma}f\ \
\forall\ \ \text{smooth $f>0$ on $\Sigma$},
\end{equation}
whence letting
$$
f=h^p\ \ \&\ \ p\in [1,\infty)\ \ \text{within}\ \ \eqref{e100},
$$
along with the H\"older inequality, produces
\begin{equation}
\label{e1001}
n|\mathbb B^{n}|^\frac1n\left(\int_{\Sigma}h^\frac{pn}{n-1}\,\right)^\frac{n-1}{n}\le\int_{\Sigma}h^p\Bigg(|\mathsf{H}|+p\left(\frac{\int_{\Sigma}|\nabla^\Sigma h|^p}{\int_{\Sigma}h^p}\right)^\frac1p\Bigg)\,+\int_{\partial \Sigma}h^p\
\forall\ \text{smooth $h>0$ on $\Sigma$}.
\end{equation}
\item Geometrically speaking, $h=1$ in \eqref{e1001} deduces the isoperimetry (cf. \cite[Corollary 2]{BE})
\begin{equation}
\label{e1000}
 n|\mathbb B^{n}|^\frac1n|\Sigma|^\frac{n-1}{n}\le|{\partial \Sigma}|+\int_{\Sigma}|\mathsf{H}|.
\end{equation}
As described within \cite[Corollary 5.4]{Br2}, if $\Sigma$ is a compact minimal hypersurface in $\mathbb R^{n+1}$ -i.e.- $\mathsf{H}=0$ with boundary $\partial\Sigma$, then
$$
n|\mathbb B^{n}|^\frac1n|\Sigma|^\frac{n-1}{n}\le|\partial\Sigma|\ \ \text{with equality holding iff $\Sigma$ is a flat disk}.
$$
\end{itemize}

Next, a combination of not only \eqref{e100}-\eqref{e1001}-\eqref{e1000} but also \cite{ChW, DFN, JX, XX} leads to a case study of the generic heat dispersions of any smooth compact Riemannian $2\le n$-manifold $(M,g)$ with boundary $\partial M$ - volume $\upsilon_g$ - surface-area $\sigma_g$:
\begin{center}
\begin{tikzpicture}{\color{red}
\draw[rounded corners=36pt](7,-1)--(4.2,-1)--(2,-2)--(0,0) -- (2,2)--(4.2,1)--(7,1);
\draw (1.5,0.2) arc (175:315:1cm and 0.5cm);
\draw (2.99,-0.28) arc (-30:180:0.7cm and 0.3cm);
\draw (7.5,0) arc (0:360:0.5cm and 1cm);
\node (a) at (20:2) {$M$};
\node (a) at (-10:7) {$\partial M$};}
\end{tikzpicture}
\end{center}
%\begin{tikzpicture}
%\draw[rounded corners=35pt](7.5,-1)--(4.2,-1)--(2,-2)--(0,0) -- (2,2)--(4.2,1)--(7.5,1);
%\draw (1.5,0.2) arc (175:315:1cm and 0.5cm);
%\draw (3,-0.28) arc (-30:180:0.7cm and 0.3cm);
%\draw (6,0) arc (0:360:0.5cm and 1cm);
%\draw (8,0) arc (0:360:0.5cm and 1cm);
%\node (a) at (20:2.5) {$M$};
%\node (a) at (12:6.5) {$[0,1]$};
%\node (a) at (-12:7.5) {$\partial M$};
%\end{tikzpicture}
More precisely, given a power $p\in [1,\infty)$, a nonnegative smooth function pair $\big\{\varPhi,\varPsi\big\}$ on the pair $\{M,\partial M\}$, and a compact connected set $K\subseteq M$ which not only exists as a conductor of the unit temperature but also is thermally insulated by surrounding it with a layer of thermal insulator $M\setminus K$ with $K\subseteq M$  \& Lipschitz bounary pair $\{\partial K,\partial\Omega\}$, the so-called generic heat dispersion of a given condenser $(K,M)$ is defined as (cf. \cite{AC, Ba, DNT} for the Euclidean case)
\begin{equation}
\label{e11}
{\rm H^d}_{p,\varPhi,\varPsi}(K, M)=\underset{f\in W^{1,p}(M)\ \&\ f\big|_K=1}{\inf}\left(\int_{M}\Big(|\nabla f|^p+\varPhi|f|^p\Big)\,d\upsilon_g+\int_{\partial M}\varPsi|f|^p\,d\sigma_g\right),
\end{equation}
where
$$
\begin{cases}
W^{1,p}( M)=\Big\{f:\ \ \|f\|_{1,p, M}^p=\int_ M\Big(|\nabla f|^p+|f|^p\Big)\,d\upsilon_g<\infty\Big\};\\
W^{1,p}_0( M)=\text{the closure of $C_0^\infty( M)$ under the norm $\|\cdot\|_{1,p, M}$};\\
C_0^\infty( M)=\Big\{\text{all infinitely differentiable functions with compact support in $ M$}\Big\}.
\end{cases}
$$
Quite importantly, because we are here seeking the infimum of an energy, we can always assume that each function $f$ satisfies that $0\leq f\leq 1$ - otherwise - we can consider $-f$ or $2-f$ instead. Below are two special circumstances.
\begin{itemize}
	\item Since
	$$
	f\in W^{1,p}_0( M)\subseteq W^{1,p}( M) \Longrightarrow f\big|_{\partial M}=0,
	$$
	one has
	\begin{equation}
	\label{e12}
	{\rm H^d}_{p,\varPhi,\varPsi}(K, M)\le\underset{f\in W^{1,p}_0( M)\ \&\ f\big|_K=1}{\inf}\int_{ M}\Big(|\nabla f|^p+\varPhi|f|^p\Big)\,d\upsilon_g\ \forall\ p\in [1,\infty).
	\end{equation}

	\item Since
	$$
	1\in W^{1,p}( M)\Longrightarrow |\nabla 1|=0\ \ \forall\ \ p\in [1,\infty),
	$$
	one has
	\begin{equation}
	\label{e14}
	{\rm H^d}_{p,\varPhi,\varPsi}(K, M)\le\int_{ M}\varPhi\,d\upsilon_g +\int_{\partial M}\varPsi\,d\sigma_g\ \ \forall\ \ p\in [1,\infty).
	\end{equation}
	Of geometric interest is that if $K=M$ within \eqref{e14} then
	$$
	|\nabla f|\big|_ M=0=(f-1)\big|_{\partial M},
	$$
	whence \eqref{e14} reaches its equality (cf. \eqref{e1000}'s left-hand-side)
	\begin{equation}
	\label{e14a}
	{\rm H^d}_{p,\varPhi,\varPsi}(M, M)=\int_{ M}\varPhi\,d\upsilon_g +\int_{\partial M}\varPsi\,d\sigma_g\ \ \forall\ \ p\in [1,\infty).
	\end{equation}
	Especially, if $\varPhi=1=\varPsi$ in \eqref{e14a}, then
\begin{equation*}
{\rm H^d}_{p,1,1}(M, M)=\upsilon_g(M)+\sigma_g({\partial M})\ \ \forall\ \ p\in [1,\infty).
\end{equation*}
\end{itemize}

Finally, with the foregoing concepts we get the following brand-new principle of mathematical-physical interest.

\begin{theorem}
	\label{t11} Let
	$$
	\begin{cases}
	1<p<\infty;\\
	\Big(0\leq \varPhi, 0\le\varPsi\Big)\in C^\infty(M)\times C^\infty(\partial M);\\
	\Delta_p f=\text{div}(|\nabla f|^{p-2}\nabla f);\\
	\mathscr{F}(K, M)=\left\{f\in C^2(M): \begin{cases} 0\leq f\leq 1;\\
	f=1\ \text{on}\ K;\\
	\frac{{\nabla f}\cdot{\bf n}}{|\nabla f|^{2-p}}+\frac{\varPsi f}{|f|^{2-p}}=0\  \text{on}\  \partial M.
	\end{cases}\right\};\\
	{\rm H^d}_{\Delta_p,\varPhi,\varPsi}(K, M)
	=\underset{f\in\mathscr{F}(K, M)}{\inf}\left(\int_{M}\bigg(\big|\Delta_p f-{\varPhi}{f^{p-1}}\big|+{\varPhi}{f^{p-1}}\bigg)\,d\upsilon_g+\int_{\partial M}\frac{\varPsi }{f^{1-p}}\,d\sigma_g\right).
	
		\end{cases}
		$$
		Then there exists the half generic heat dispersion law
		\begin{equation}
		\label{e18}
	  {\rm H^d}_{p,\varPhi,\varPsi}(K, M)=2^{-1}	{\rm H^d}_{\Delta_p,\varPhi,\varPsi}(K, M).
		\end{equation}
			Consequently, there are two extreme situations.
			\begin{itemize}
				\item[\rm (i)] If not only $\varPhi=0$ but also $\varPsi=$ a constant $\to \infty$, then
			$$
			\begin{cases}
			\underset{\varPsi\to\infty}{\lim}{\rm H^d}_{p,0,\varPsi}(K, M)=\underset{f\in C^1_0(M)\ \&\ f\big|_{K}=1}{\inf}\int_{M}|\nabla f|^p\,d\upsilon_g;\\
			
			\underset{\varPsi\to\infty}{\lim}{\rm H^d}_{\Delta_p,0,\varPsi}(K, M)=\underset{
				f\in C^2_0(M),\ \& \ f\big|_K=1}{\inf}\int_{M}|\Delta_p f|\,d\upsilon_g.
			\end{cases}
			$$
		
		\item[\rm(ii)] If not only $\varPhi=$ a constant $\to\infty$ but also $\varPsi=0$, then
		$$
		\begin{cases}
		\underset{\varPhi\to\infty}{\lim}{\rm H^d}_{p,\varPhi,0}(K, M)=\infty;\\
		\underset{\varPhi\to\infty}{\lim}{\rm H^d}_{\Delta_p,\varPhi,0}(K, M)=\infty.
		\end{cases}
		$$
		\end{itemize}
\end{theorem}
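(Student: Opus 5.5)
The plan is to prove the two inequalities in \eqref{e18} separately, using the divergence theorem together with the Robin condition built into $\mathscr{F}(K,M)$.

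\emph{Upper bound: $2{\rm H^d}_{p,\varPhi,\varPsi}\le {\rm H^d}_{\Delta_p,\varPhi,\varPsi}$.} Fix $f\in\mathscr{F}(K,M)$. Since $f^p=f\cdot f^{p-1}$, I integrate $\text{div}(f|\nabla f|^{p-2}\nabla f)=|\nabla f|^p+f\Delta_p f$ over $M$. The Robin condition turns the boundary flux into $-\int_{\partial M}\varPsi f^p\,d\sigma_g$. This gives
\begin{equation*}
\int_M\Big(|\nabla f|^p+\varPhi f^p\Big)\,d\upsilon_g+\int_{\partial M}\varPsi f^p\,d\sigma_g=\int_M f\big(\varPhi f^{p-1}-\Delta_p f\big)\,d\upsilon_g .
\end{equation*}
Because $0\le f\le 1$, the right side is at most $\int_M|\Delta_pf-\varPhi f^{p-1}|\,d\upsilon_g$. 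The trouble is that this produces no factor $1/2$. To get it, I would also use the plain flux identity
\begin{equation*}
\int_M(\Delta_pf-\varPhi f^{p-1})\,d\upsilon_g=-\int_{\partial M}\varPsi f^{p-1}\,d\sigma_g-\int_M\varPhi f^{p-1}\,d\upsilon_g,
\end{equation*}
so that the functional in ${\rm H^d}_{\Delta_p}$ equals $\int_M\big(|u|-u\big)$ with $u=\Delta_pf-\varPhi f^{p-1}$, which is $2\int_M u^-$. Here $u^-=\max\{-u,0\}$. It remains to show $\int_M f\,(-u)\le\int_M u^-$, and this follows from $0\le f\le1$. So the energy of $f$ is at most half the $\Delta_p$-functional, and taking the infimum over $\mathscr{F}$, which is admissible in \eqref{e11}, yields the upper bound.

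\emph{Lower bound.} Take a minimizer $u_0$ of \eqref{e11}. It exists by direct methods since $1<p<\infty$, it is truncated to $[0,1]$, and it satisfies $\Delta_pu_0=\varPhi u_0^{p-1}$ in $M\setminus K$ together with the Robin condition on $\partial M$. On $K$ we have $u_0=1$, so there $u=-\varPhi$ and $u^-=\varPhi$. A trouble spot is the measure part of $\Delta_p u_0$ on $\partial K$: it is a nonpositive flux measure, and its total mass equals, via the identities above, the energy plus the $\varPhi$-contributions. Hence $2\int u^-$ computed distributionally equals twice the energy, i.e. the functional's value at $u_0$ is $2{\rm H^d}_{p}$. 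The main obstacle is that $u_0\notin C^2$ and $\Delta_pu_0$ is a measure concentrated on $\partial K$. I would try to approximate $u_0$ by $C^2$ functions in $\mathscr{F}$ (mollify, smooth $K$'s indicator slightly outward, and correct near $\partial M$ to keep the Robin condition), checking that the $L^1$ norms of $u$ converge to the total variation. In that step the Lipschitz regularity of $\partial K$ and $\partial M$ is used. This gives ${\rm H^d}_{\Delta_p}\le 2{\rm H^d}_p$.

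\emph{Consequences.} For (i), as $\varPsi\to\infty$ the Robin term forces $f|_{\partial M}\to0$. By monotonicity in $\varPsi$ and $\Gamma$-convergence, the limit is the Dirichlet $p$-capacity, and via \eqref{e18} the $\Delta_p$ version follows, the integrand reducing to $|\Delta_pf|$. For (ii), any admissible $f$ equals $1$ on $K$, so by \eqref{e11} the energy is at least $\varPhi\,\upsilon_g(K)\to\infty$ when $\upsilon_g(K)>0$; otherwise I would use a Poincaré-type bound near $K$. The $\Delta_p$ statement then follows from \eqref{e18}.
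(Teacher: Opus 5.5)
Your inequality $2{\rm H^d}_{p,\varPhi,\varPsi}(K,M)\le{\rm H^d}_{\Delta_p,\varPhi,\varPsi}(K,M)$ is correct, and it is the paper's $(1-2f)$ computation in another form. Write $u=\Delta_pf-\varPhi f^{p-1}$, $E(f)$ for the energy in \eqref{e11} and $J(f)$ for the $\Delta_p$-functional. Your two flux identities give $\tfrac12J(f)-E(f)=\int_M\big((1-f)u^-+fu^+\big)\,d\upsilon_g=\tfrac12\int_M\big(|u|-(1-2f)u\big)\,d\upsilon_g\ge0$. Because you work on all of $M$, you also avoid the paper's splitting off of $K$ and its condition $\nabla f|_{\partial K}=0$.

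The genuine gap is the reverse inequality ${\rm H^d}_{\Delta_p,\varPhi,\varPsi}\le 2{\rm H^d}_{p,\varPhi,\varPsi}$. Your distributional bookkeeping at the minimizer $u_0$ is fine. The singular part on $\partial K$ has mass exactly $E(u_0)-\int_K\varPhi\,d\upsilon_g$, the flux through $\partial K$, so the formal value is indeed $2E(u_0)$. However, $J(f)\ge2E(f)\ge2E(u_0)$ holds for every $f\in\mathscr{F}(K,M)$. The whole content of this direction is therefore the upper estimate $\limsup_n\int_M|\Delta_pf_n-\varPhi f_n^{p-1}|\,d\upsilon_g\le|\mu|(M)$ for some $f_n\in\mathscr{F}(K,M)$. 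Saying ``checking that the $L^1$ norms converge to the total variation'' just restates it. Mollification gives this for the (Euclidean) Laplacian because $\Delta(u_0*\rho_\epsilon)=(\Delta u_0)*\rho_\epsilon$, but $\Delta_p$ does not commute with convolution. You would have to:
\begin{itemize}
\item control $\Delta_p(u_0*\rho_\epsilon)$ in $L^1$ inside $M\setminus K$, which means second-order convergence of a degenerate or singular operator near critical points of $u_0$;
\item show that smoothing the kink across $\partial K$ creates no positive part of non-negligible mass;
\item show that your corrections restoring $f=1$ on $K$ and the nonlinear Robin condition add no $\Delta_p$-mass.
\end{itemize}
None of these estimates is supplied.

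The missing idea, which is what the paper uses, is to reparametrize the minimizer through its level sets instead of smoothing it. Set $w=h(f_\ast)$ with $h\in C^2[0,1]$, $h(t)=t$ on $[0,1-\delta]$, $h'\ge0$, $h(1)=1$ and $h'(1)=h''(1)=0$. Then:
\begin{itemize}
\item $w=f_\ast$ wherever $f_\ast\le1-\delta$, so the equation and the Robin condition are untouched there;
\item $\nabla w=0$ on $\partial K$;
\item $\Delta_pw-\varPhi w^{p-1}=(\dot h^{p-1})'(f_\ast)|\nabla f_\ast|^p+\varPhi\big(\dot h^{p-1}f_\ast^{p-1}-h^{p-1}(f_\ast)\big)$ in $M\setminus K$.
\end{itemize}
By the coarea formula, the first term has $L^1$ norm $\int_{1-\delta}^1|(\dot h^{p-1})'(t)|\big(\int_{\{f_\ast=t\}}|\nabla f_\ast|^{p-1}\,d\sigma_g\big)\,dt$. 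By the Euler--Lagrange equation, the inner flux equals $\int_{\{f_\ast<t\}}\varPhi f_\ast^{p-1}\,d\upsilon_g+\int_{\partial M}\varPsi f_\ast^{p-1}\,d\sigma_g$, which tends to the flux through $\partial K$. Now choose $h$ so that the total variation of $\dot h^{p-1}$ on $[1-\delta,1]$ tends to $1$; a slight overshoot of $h'$ above $1$ is forced by $h(1)=1$. This turns the singular measure on $\partial K$ into an $L^1$ density of asymptotically the right mass, located where $w\approx1$. The $\varPhi$-term is $O\big(\upsilon_g(\{1-\delta<f_\ast<1\})\big)$.

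Two smaller points on the consequences, which the paper does not spell out either:
\begin{itemize}
\item The second line of (i) does not follow from \eqref{e18} alone. \eqref{e18} only identifies $\lim_{\varPsi\to\infty}{\rm H^d}_{\Delta_p,0,\varPsi}(K,M)$ with twice the Dirichlet $p$-capacity. Equating that with $\inf\int_M|\Delta_pf|\,d\upsilon_g$ over $C^2_0(M)$ requires a Dirichlet half law. Its hard direction needs the capacitary potential flattened at level $0$ as well; compare the proof of Proposition \ref{p31}(ii).
\item In (ii), $K$ is a Lipschitz body, so $\upsilon_g(K)>0$ and your bound $\varPhi\,\upsilon_g(K)$ already suffices. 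The Poincar\'e fallback could not work for null $K$ when $p\le n$, since there the infimum is $0$.
\end{itemize}
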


Within \S\ref{s2}, we present a proof of Theorem \ref{t11}. Furthermore, within \S\ref{s3}, we widely explore Theorem \ref{t11} to interestingly establish not only Proposition \ref{p32} which exists as a comparison law for the generic heat dispersion but also Proposition \ref{p31} which exists as a recycling law for the quasilinear Lapace-Robin eigenvalue.

\section{Proof of Theorem \ref{t11}}\label{s2}
\setcounter{equation}{0}

\begin{lemma}\label{l21} There is a unique minimizer $f_\ast\in W^{1,p}(M)$ such that not only
\begin{equation}
\label{e14aaa}\begin{aligned}
{\rm H^d}_{p,\varPhi,\varPsi}(K, M)&=\int_{M}\Big(|\nabla f_\ast|^p+\varPhi f_\ast ^p\Big)\,d\upsilon_g+ \int_{\partial M}\varPsi f_\ast ^p\,d\sigma_g
\\ &=\int_{M}\varPhi f_\ast^{p-1}\,d\upsilon_g+\int_{\partial M} \varPsi f_\ast^{p-1}\,d\sigma_g
\end{aligned}
\end{equation}
but also
\begin{equation}\label{e14aaaa}
\begin{cases}
-\Delta_p f_\ast +\varPhi|f_\ast|^{p-2}f_\ast=0&\ \text{in\ \ $ M\setminus K$};\\
f_\ast=1&\ \text{in\ \ $K$};\\
|\nabla f_\ast|^{p-2}{\nabla f_\ast\cdot{\bf n}}+\varPsi|f_\ast|^{p-2}f_\ast=0&\ \text{on}\, \ \partial M,
\end{cases}
\end{equation}
holds in the weak sense of
\begin{equation}
\label{e14aaaaa}
\int_{ M}\Bigg(\bigg(\frac{\nabla f_\ast\cdot\nabla \phi}{|\nabla f_\ast|^{2-p}}\bigg)+\frac{\varPhi f_\ast\phi}{|f_\ast|^{2-p}}\Bigg)\,d\upsilon_g+ \int_{\partial M}\left(\frac{\varPsi f_\ast\phi}{|f_\ast|^{2-p}}\right)\,d\sigma_g=0\ \
\end{equation}
for any $\phi\in C^\infty( M)$ satisfying $\phi\big|_K=0.$
Consequently, there is the vanishing heat dispersion equivalence
$$
{\rm H^d}_{p,\varPhi,\varPsi}(K, M)=0\Longleftrightarrow\varPhi=0=\varPsi.
$$
\end{lemma}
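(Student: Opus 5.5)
The plan is to use the direct method of the calculus of variations for existence, strict convexity for uniqueness, the first variation for \eqref{e14aaaaa}, and then a well-chosen test function for the second equality in \eqref{e14aaa}. Throughout, $1<p<\infty$ is as in Theorem \ref{t11}, $M$ is connected, and $K\neq\emptyset$.

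\emph{Existence.} Write $E(f)$ for the functional in \eqref{e11}, and let $\mathcal A=\{f\in W^{1,p}(M): f|_K=1\}$. The condition $f|_K=1$ is understood in the trace/quasi-everywhere sense on the Lipschitz set $K$, so $\mathcal A$ is a closed affine subset of $W^{1,p}(M)$. As remarked after \eqref{e11}, truncation $f\mapsto\min\{\max\{f,0\},1\}$ does not increase $E$. Hence we may take a minimizing sequence $(f_j)\subset\mathcal A$ with $0\le f_j\le1$. By \eqref{e14}, $\int_M|\nabla f_j|^p$ is bounded, and $\int_M|f_j|^p\le\upsilon_g(M)$, so $(f_j)$ is bounded in $W^{1,p}(M)$. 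Since $p>1$, the space is reflexive and we extract a weakly convergent subsequence $f_j\rightharpoonup f_\ast$. By Rellich's theorem and the compactness of the trace $W^{1,p}(M)\to L^p(\partial M)$ (valid since $M$ is compact with smooth boundary), we get $f_j\to f_\ast$ strongly in $L^p(M)$ and in $L^p(\partial M)$. Therefore the $\varPhi$- and $\varPsi$-terms converge. The gradient term is weakly lower semicontinuous by convexity. Since $\mathcal A$ is convex and closed, it is weakly closed, so $f_\ast\in\mathcal A$, $0\le f_\ast\le1$, and $f_\ast$ is a minimizer.

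\emph{Uniqueness.} Suppose $f_1,f_2$ are two minimizers. Then $\frac{f_1+f_2}{2}\in\mathcal A$. All three terms of $E$ are convex, and $\xi\mapsto|\xi|^p$ is strictly convex for $p>1$. Comparing $E\big(\frac{f_1+f_2}2\big)$ with $\frac12E(f_1)+\frac12E(f_2)$ therefore forces $\nabla f_1=\nabla f_2$ a.e. As $M$ is connected, $f_1-f_2$ is a constant, and this constant is $0$ because $f_1=f_2=1$ on $K\neq\emptyset$.

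\emph{Euler--Lagrange equation.} Take $\phi\in C^\infty(M)$ with $\phi|_K=0$, so that $f_\ast+t\phi\in\mathcal A$ for all $t$. The map $t\mapsto E(f_\ast+t\phi)$ is differentiable, since $t\mapsto|a+tb|^p$ is $C^1$ for $p>1$ and dominated convergence applies. Setting its derivative at $t=0$ equal to zero gives \eqref{e14aaaaa}. This is precisely the weak form of \eqref{e14aaaa}, with $-\Delta_pf_\ast+\varPhi f_\ast^{p-1}=0$ in $M\setminus K$ and the Robin condition on $\partial M$. By density, \eqref{e14aaaaa} extends to every $\phi\in W^{1,p}(M)$ vanishing on $K$.

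\emph{Energy identity.} We insert $\phi=f_\ast-1$, which vanishes on $K$. Using $f_\ast\ge0$, we have $|f_\ast|^{p-2}f_\ast\,\phi=f_\ast^p-f_\ast^{p-1}$ and $|\nabla f_\ast|^{p-2}\nabla f_\ast\cdot\nabla\phi=|\nabla f_\ast|^p$. Substituting into \eqref{e14aaaaa} gives
$$\int_M\big(|\nabla f_\ast|^p+\varPhi f_\ast^p\big)d\upsilon_g+\int_{\partial M}\varPsi f_\ast^p\,d\sigma_g=\int_M\varPhi f_\ast^{p-1}d\upsilon_g+\int_{\partial M}\varPsi f_\ast^{p-1}d\sigma_g,$$
which is the second line of \eqref{e14aaa}.

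\emph{Vanishing equivalence.} If $\varPhi=0=\varPsi$, then $E(1)=0$, so the infimum is $0$. Conversely, suppose ${\rm H^d}_{p,\varPhi,\varPsi}(K,M)=0$. Then $\nabla f_\ast=0$, so $f_\ast\equiv1$ by connectedness and $f_\ast|_K=1$. The first line of \eqref{e14aaa} then reads $\int_M\varPhi\,d\upsilon_g+\int_{\partial M}\varPsi\,d\sigma_g=0$. Since $\varPhi$ and $\varPsi$ are nonnegative and continuous, they vanish identically.

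The main obstacle is the functional-analytic setup rather than any computation. One must make sure that the constraint $f|_K=1$ defines a weakly closed set in $W^{1,p}(M)$, which is where the Lipschitz regularity of $\partial K$ is used. One must also check that smooth test functions vanishing on $K$ are dense among $W^{1,p}$ functions vanishing on $K$, so that the choice $\phi=f_\ast-1$ is legitimate. I would handle both via the trace theorem on the Lipschitz domain $M\setminus K$.
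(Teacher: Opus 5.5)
Your proposal is correct and follows essentially the same route as the paper: the direct method via Rellich--Kondrachov and the compact trace embedding, strict convexity of $|\cdot|^p$ for uniqueness, the first variation for the weak Euler--Lagrange equation, and the test function $\phi=f_\ast-1$ for the second equality in \eqref{e14aaa}. You also fill in points the paper leaves implicit: the truncation justifying $0\le f_j\le1$, the use of connectedness of $M$ and $K\neq\emptyset$ to finish uniqueness, and a proof of the vanishing equivalence; the density step you flag can be skipped, since $f_\ast+t(f_\ast-1)$ is itself admissible.
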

\begin{proof}
Referring to the argument for either \cite[Lemma 4.1]{He} or \cite[Proposition 3.1]{Ba}, suppose that $\{f_j\}$ is a minimizing sequence of \eqref{e11}. Then
\begin{equation}
\label{e21e}
\begin{cases}
f_j\in W^{1,p}( M);\\
f_j\big|_K=1;\\
\underset{j\to\infty}{\lim}
\left(\int_{ M}\Big(|\nabla f_j|^p+\varPhi|f_j|^p\Big)\,d\upsilon_g+\int_{\partial M}\varPsi|f_j|^p\,d\sigma_g\right)={\rm H^d}_{p,\varPhi,\varPsi}(K, M).
\end{cases}
\end{equation}
 It follows from \eqref{e21e}'s third equality and $0\leq f_j\leq 1$ that $\{f_j\}$ is a bounded sequence in $W^{1,p}( M)$ which is reflexiv. Consequently, the Rellich-Kondrakov theorem, together with the Sobolev trace compact embedding
$$
W^{1,p}( M)\hookrightarrow L^p(\partial M),
$$
ensures a subsequence $\{f_{j_k}\}$ as well as a function $f_\ast$ such that
$$
\begin{cases}
f_\ast\in W^{1,p}( M);\\
	f_{j_k}\rightharpoonup {f_\ast}\ \ &\text{in}\ \ W^{1,p}( M);\\
	\|f_{j_k}-{f_\ast}\|^p_{p, M}=\int_{ M}|f_{j_k}-{f_\ast}|^p\,d\upsilon_g\to 0;\\
	\|f_{j_k}-{f_\ast}\|^p_{p,\partial M}=\int_{\partial M}|f_{j_k}-{f_\ast}|^p\,d\sigma_g\to 0;\\
	f_{j_k}\to {f_\ast}\ \ &\text{a.e. on}\ \  M;\\
	f_\ast\big|_K=1.
\end{cases}
$$
Clearly, the above weak convergence derives the semicontinuity
\begin{align*}
&\int_{ M}\Big(|\nabla f_\ast|^p+\varPhi|f_\ast|^p\Big)\,d\upsilon_g+\int_{\partial M}\varPsi|f_{\ast}|^p\,d\sigma_g\\
&\quad\le\underset{k\to\infty}{\liminf}\left(\int_{ M}\Big(|\nabla f_{j_k}|^p+\varPhi|f_{j_k}|^p\Big)\,d\upsilon_g+\int_{\partial M}\varPsi|f_{j_k}|^p\,d\sigma_g\right).
\end{align*}
This implies the first equality of \eqref{e14aaa}. Of course, the uniqueness of $f_\ast$ follows from the fact that $|\cdot|^p$ is a strict convex function as $p\in (1,\infty)$.

Since $f_\ast$ is a unique minimizer for ${\rm H^d}_{p,\varPhi,\varPsi}(K, M)$, the induced Euler equation for $f_\ast$ leads to \eqref{e14aaaaa} -equivalently- \eqref{e14aaaa}. Finally, by setting $\phi=f_\ast-1$ in \eqref{e14aaaaa}, we obtain the second equality of \eqref{e14aaa}.

\end{proof}

Thanks to not only Lemma \ref{l21} but also the essential fact that any Lipschitz domain can be approximated by a sequence of the smooth domains within $(M,g)$, it is enough to validate \eqref{e18} for any smooth condenser $(K, M)$ according to the following three phases.

\begin{itemize}
 \item Firstly, it is easy to verify the following two identifications:
	$$
	\begin{cases}
	{\rm H^d}_{p,\varPhi,\varPsi}(K, M) =\int_K \varPhi\,d\upsilon_g\\
	\quad\quad\quad\quad\quad\quad\ \ \  +\underset{f\in \mathfrak{C}^{0,1}(K, M)}{\inf}\left(\int_{ M\setminus K}\Big(|\nabla f|^p+\varPhi f ^p\Big)\,d\upsilon_g+\int_{\partial M}\varPsi f^{p}\,d\sigma_g\right);\\
	{\rm H^d}_{\Delta_p,\varPhi,\varPsi}(K, M) =2\int_K \varPhi\,d\upsilon_g\\
		\quad\quad\quad\quad\quad\quad\ \ \ \  +\underset{f\in \mathfrak{C}^{1,1}_{\varPsi,p}(K, M)}{\inf}\left(\int_{ M\setminus K}\Big(|\Delta_p f-{\varPhi}{f^{p-1}}|+{\varPhi}{f^{p-1}}\Big)\,d\upsilon_g+\int_{\partial M}{\varPsi}{f^{p-1}} d\sigma_g\right),
	\end{cases}
	$$
	where
	$$
	\begin{cases} \mathfrak{C}^{0,1}(K, M)=\Big\{f\in C^{0,1}(\overline{ M\setminus K}):\ 0\leq f\leq 1=f|_{\partial K}\Big\};\\
	\mathfrak{C}^{1,1}_{\varPsi,p}(K, M)=\Bigg\{f\in C^{1,1}(\overline{ M\setminus K}):\ \begin{cases}0\leq f\leq 1= f\big|_{\partial K};\\
	|\nabla f|\big|_{\partial K}=\bigg(\frac{{\nabla f}\cdot{\bf n}}{|\nabla f|^{2-p}}+{\varPsi}{f^{p-1}}\bigg)\Bigg|_{\partial M}=0.
	\end{cases}
	\Bigg\}.
	\end{cases}
	$$
\item Secondly, in the sequel we prove \eqref{e18} according to two circumstances.
	\begin{itemize}
		\item On the one hand, if
		$$
		f\in \mathfrak{C}^{1,1}_{\varPsi,p}(K, M)\subseteq\mathfrak{C}^{0,1}(K, M),
		$$
		then {
		$$
		\begin{aligned}
		 &\int_{ M\setminus K}\left|\Delta_p f-{\varPhi}{f^{p-1}}\right|\,d\upsilon_g\\
		 &\quad\geq \int_{ M\setminus K}(1-2f) (\Delta_p f-{\varPhi}{f^{p-1}})\,d\upsilon_g
		\\ &\quad=\int_{\partial( M\setminus K)}\left(\frac{(1-2f){\nabla f}\cdot{\bf n}}{|\nabla f|^{2-p}}\right)\,d\upsilon_g\\
		&\quad\quad-\int_{ M\setminus K}\left(\bigg(\frac{\big(\nabla(1-2f)\big)\cdot(\nabla f)}{|\nabla f|^{2-p}}\bigg)+\bigg(\frac{(1-2f)\varPhi}{f^{1-p}}\bigg)\right)\,d\upsilon_g
		\\ &\quad =\int_{\partial  M}\bigg(\frac{(2f-1)\varPsi}{f^{1-p}}\bigg)\,d\upsilon_g+2\int_{ M\setminus K}|\nabla f|^p\,d\upsilon_g+\int_{ M\setminus K}\bigg(\frac{(2f-1)\varPhi}{f^{1-p}}\bigg)\,d\upsilon_g
		\\ &\quad =2\left(\int_{ M\setminus K}\Big(|\nabla f|^p+\varPhi f^p\Big)\,d\upsilon_g+\int_{\partial  M} \varPsi f^{p}\,d\sigma_g\right)\\
		&\quad\quad-\left(\int_{ M\setminus K}{\varPhi}{f^{p-1}}\,d\upsilon_g+\int_{\partial M}{\varPsi}{f^{p-1}}\,d\sigma_g\right).
		\end{aligned}
		$$
}
		This in turn derives
		\begin{equation}
		\label{e31e}
		{\rm H^d}_{\Delta_p,\varPhi,\varPsi}(K, M) \geq 2{\rm H^d}_{p,\varPhi,\varPsi}(K, M).
		\end{equation}
		\item On the other hand, let $0\le f_\ast\le 1$ be the minimizer as defined in Lemma \ref{l21}. Notice that
		$$\Delta_pf_\ast\ \ \&\ \ f_\ast\in C^{1,\alpha}(\overline{M\setminus K})\ \ \forall\ \ \alpha\in (0,1)$$
		by the elliptic regularity theory. Thus this last $C^{1,\alpha}$-regularity allows us to select $f_\ast$ as a test function to calculate $${\rm H^d}_{\Delta_p,\varPhi,\varPsi}(K, M)\ \ \text{as long as}\ \  \nabla f_\ast\big|_{\partial K}=0.
		$$ However, this last vanishing condition is usually not valid. To fix this issue,
	in the sequel, for sufficiently small $\delta>0,$ we are going to construct a function $h\in C^2[0,1]$ satisfying
\begin{equation}\label{e22}
		\begin{cases}
		h(t)\big|_{[0,1-\delta]}=t;\\
		h'(1-\delta)-1=	h''(1-\delta)=h'(1)=h''(1)=0;\\
		h(1)=1;\\
        \dot{h}(t)=h'(t)\geq 0\ \ \forall\ \ t\in[0,1].
		\end{cases}
\end{equation}
Upon setting
$$
 w=h(f_\ast){\Longrightarrow |\nabla w|\big|_{\partial K}=0},
		$$
		we have not only
			\begin{equation}\label{e23}\begin{aligned}
		\Delta_pw-\varPhi w^{p-1}&=\dot{h}^{p-1}\Delta_p f_\ast+(\dot{h}^{p-1})'|\nabla f_\ast|^p-\varPhi w^{p-1}
\\ & =(\dot{h}^{p-1})'|\nabla f_\ast|^p+\dot{h}^{p-1}\varPhi f_\ast^{p-1}-\varPhi h^{p-1}(f_\ast)\ \ \text{in}\ \  M\setminus K,
\end{aligned}
		\end{equation}
but also
		\begin{equation}
		\label{e23b}
		\begin{aligned}
		&\int_{ M\setminus K}\big|(\dot{h}^{p-1})'(f_\ast)\big| |\nabla f_\ast|^p\,d\upsilon_g\\
		&\quad = \int_{\{1-\delta<f_\ast<1\}} \big|(\dot{h}^{p-1})'(f_\ast)\big| |\nabla f_\ast|^p\,d\upsilon_g
		\\ &\quad = \int_{1-\delta}^1\big|(\dot{h}^{p-1})'(t)\big|\left(\int_{\{f_\ast=t\}}|\nabla f_\ast|^{p-1}\,d\sigma_g\right)\,dt,
		\end{aligned}
		\end{equation}
		whence making a two-fold treatment.
		\begin{itemize}
		
		\item On the one hand, letting $t$ be sufficiently close to $1$, along with \eqref{e14aaaa}, ensures
		\begin{equation}\label{e36}
	\begin{aligned}
		\int_{\{f_\ast=t\}}|\nabla f_\ast|^{p-1}\,d\sigma_g & =\int_{ M\cap\{f_\ast<t\}}\Delta_p f_\ast\,d\upsilon_g-\int_{\partial M}|\nabla f_\ast|^{p-2}\left(\nabla f_\ast\cdot{\bf n}\right)\,d\sigma_g
		\\& =\int_{ M\cap\{f_\ast<t\}}\varPhi f_\ast^{p-1}\,d\upsilon_g  +\int_{\partial M} \varPsi f_\ast^{p-1} d\sigma_g
\\ &\to  \int_{ M\setminus K}\varPhi f_\ast^{p-1}\,d\upsilon_g  +\int_{\partial M} \varPsi f_\ast^{p-1} d\sigma_g\ \ \Big(\text{as}\ \ \delta\to 0\Big)
\\ &={\rm H^d}_{p,\varPhi,\varPsi}(K, M){-\int_K\varPhi\,d\upsilon_g}.
		\end{aligned}
		\end{equation}
		\item On the other hand, we can select a special function $h$ such that
		\begin{equation}
		\label{e37}
		\int_{1-\delta}^1\big|(\dot{h}^{p-1})'(t)\big|\,dt\to 1\ \ \text{as}\ \ \delta\to 0.
		\end{equation}
		As a matter of fact, given a sufficiently small $\varepsilon>0,$ let
		$$
		\begin{cases} \delta=\frac{2\varepsilon^2\pi}{1-2\varepsilon}+\varepsilon\pi;\\
		h''(t)=\begin{cases}
		0&\forall\ \ t\in \Big[0,1- \frac{2\varepsilon^2\pi}{1-2\varepsilon}-\varepsilon\pi\Big); \\
		\sin\frac{1}{\varepsilon}\Big(t-1+\frac{2\varepsilon^2\pi}{1-2\varepsilon}+\varepsilon\pi\Big)&\forall\ \ t\in \Big[1- \frac{2\varepsilon^2\pi}{1-2\varepsilon}-\varepsilon\pi,1-\frac{2\varepsilon^2\pi}{1-2\varepsilon}\Big);\\ \left(\frac{1+2\varepsilon}{2}\right)\left(\frac{2\varepsilon-1}{2\varepsilon^2}\right)\sin\frac{1-2\varepsilon}{2\varepsilon^2}
		\Big(t-1+\frac{2\varepsilon^2\pi}{1-2\varepsilon}\Big) &\forall\ \ t\in \Big[1-\frac{2\varepsilon^2\pi}{1-2\varepsilon},1\Big].
		\end{cases}
		\end{cases}
		$$
		Then
		$$\begin{aligned}
		h'(t)&=1+\int_0^t h''(s)ds
		\\&=\begin{cases}
		1&\forall\ \  t\in \Big[0,1- \frac{2\varepsilon^2\pi}{1-2\varepsilon}-\varepsilon\pi\Big); \\
		1+\varepsilon\Big(1-\cos\big(\frac{t-1}{\varepsilon}+\frac{2\varepsilon\pi}{1-2\varepsilon}+\pi\big)\Big)&\forall\ \ t\in \Big[1- \frac{2\varepsilon^2\pi}{1-2\varepsilon}-\varepsilon\pi,1-\frac{2\varepsilon^2\pi}{1-2\varepsilon}\Big);\\
		1+2\varepsilon+\left(\frac{1+2\varepsilon}{2}\right)\Bigg(-1+\cos\Big(
		\frac{t-1+\frac{2\varepsilon^2\pi}{1-2\varepsilon}}{\big(\frac{1-2\varepsilon}{2\varepsilon^2}\big)^{-1}}\Big)\Bigg)& \forall\ \ t\in\Big[1-\frac{2\varepsilon^2\pi}{1-2\varepsilon},1\Big].
		\end{cases}
		\end{aligned}
		$$ This in turn implies that
		 $$h(t)=\int_0^t h'(s)ds\ \ \text{satisfies}\ \
		\eqref{e22}.
		$$
		Accordingly, there holds the required limiting process \eqref{e37}:
		$$\begin{aligned}
		\int_{1-\delta}^1\big|(\dot{h}^{p-1})'(t)\big|\,dt &=(\dot{h}^{p-1})(t)\big|^{1-\frac{2\varepsilon^2\pi}{1-2\varepsilon}}_{1- \frac{2\varepsilon^2\pi}{1-2\varepsilon}-\varepsilon\pi}+(\dot{h}^{p-1})(t)\big|^{1-\frac{2\varepsilon^2\pi}{1-2\varepsilon}}_{1}
		\\ &=2(1+2\varepsilon)^{p-1}-1
		\\ &\to 1 \ \ \text{as}\ \ \varepsilon \to 0\ \ \text{or}\ \ \delta\to 0.
		\end{aligned}
		$$
		\end{itemize}
		Now, a combination of \eqref{e23}-\eqref{e23b}-\eqref{e36}-\eqref{e37} deduces
		\begin{equation}
		\label{eAe}
		\int_{ M\setminus K}\big|(\dot{h}^{p-1})'(f_\ast)\big| |\nabla f_\ast|^p\,d\upsilon_g\to {\rm H^d}_{p,\varPhi,\varPsi}(K, M){-\int_K\varPhi\,d\upsilon_g}\ \ \text{as}\ \ \delta\to 0.
		\end{equation}
		Moreover, if $f_\ast\equiv 1$, then $\varPhi=0=\varPsi$ which ensures \eqref{e18}. Otherwise, we can assume that $0\le f_\ast<1$ in $M\setminus K$ since $f_\ast$ is $p$-superharmonic, thereby getting
\begin{equation}
\label{eBe}
\begin{aligned}
&\int_{M\setminus K}\Big|\dot{h}^{p-1}\varPhi f_\ast^{p-1}-\varPhi h^{p-1}(f_\ast)\Big|\,d\upsilon_g\\
&\ \ =\int_{\{f_\ast>1-\delta\}}\Big|\dot{h}^{p-1}\varPhi f_\ast^{p-1}-\varPhi h^{p-1}(f_\ast)\Big|\,d\upsilon_g\\
&\ \ \leq (2+2\varepsilon)\big(\max\varPhi\big){\upsilon_g}\big(\{f_\ast>1-\delta\}\big) \\ &\ \ \ {\to 0\ \ \text{as}\ \ \delta\to 0}
\end{aligned}.
\end{equation}
Consequently, \eqref{e23}, along with \eqref{eAe}-\eqref{eBe}, gives not only
$$\begin{aligned}
&\int_{ M\setminus K}|\Delta_pw-\varPhi w^{p-1}|\,d\upsilon_g\\
&\quad \leq \int_{ M\setminus K}\big|(\dot{h}^{p-1})'(f_\ast)\big| |\nabla f_\ast|^p\,d\upsilon_g +\int_{M\setminus K}\Big|\dot{h}^{p-1}\varPhi f_\ast^{p-1}-\varPhi h^{p-1}(f_\ast)\Big|\,d\upsilon_g
\\ & \quad
\to {\rm H^d}_{p,\varPhi,\varPsi}(K, M){-\int_K\varPhi\,d\upsilon_g}
\ \ \text{as}\ \ \delta\to 0,
\end{aligned}
$$
but also

		$$
		\begin{aligned}
		&{\rm H^d}_{\Delta_p,\varPhi,\varPsi}(K, M){-2\int_K\varPhi\,d\upsilon_g}\\
		&\quad \leq  \int_{ M\setminus K}\Big(|\Delta_p w-\varPhi w^{p-1}|+\varPhi w ^{p-1}\Big)\,d\upsilon_g+\int_{\partial M}\varPsi w ^{p-1}d\sigma_g
		\\
	&\quad {\color{red}\leq \int_{ M\setminus K}\left(\frac{\big|(\dot{h}^{p-1})'(f_\ast)\big|}{|\nabla f_\ast|^{-p}}
+\Big|\frac{\varPhi}{(\dot{h}f_\ast)^{1-p}}-\frac{\varPhi}{ h^{1-p}(f_\ast)}\Big|+\frac{\varPhi}{w^{1-p}} \right)\,d\upsilon_g+\int_{\partial M}\frac{\varPsi}{w ^{1-p}}d\sigma_g}
 \\ &\quad  \to  {\rm H^d}_{p,\varPhi,\varPsi}(K, M){-\int_K\varPhi\,d\upsilon_g}+\int_{ M\setminus K}\varPhi f_\ast^{p-1} \,d\upsilon_g+\int_{\partial M}\varPsi f_\ast ^{p-1}d\sigma_g
\ \ \Big(\text{as}\ \delta\to 0\Big)
\\ &\quad = 2  {\rm H^d}_{p,\varPhi,\varPsi}(K, M){-2\int_K\varPhi\,d\upsilon_g} - \text{in other words} -
		\end{aligned}
$$
\begin{equation}
\label{e39e}{\rm H^d}_{\Delta_p,\varPhi,\varPsi}(K, M)\le 2  {\rm H^d}_{p,\varPhi,\varPsi}(K, M).
\end{equation}
		\end{itemize}
	
\item Thirdly, putting together \eqref{e31e}\&\eqref{e39e} immediately derives \eqref{e18} for any smooth condenser $(K, M)$.

\end{itemize}

\section{Addendum}\label{s3}

On the basis of \cite{CLW}, we can establish the following comparision law which is regarded as the first addition to Theorem \ref{t11}.

\begin{prop}
	\label{p32}
	Suppose that:
	\begin{itemize}
		\item
		
		$(M,g)$'s Ricci \& mean curvature pair $\{\mathsf{Ric},\mathsf{H}\}$ obeys
		\begin{equation*}
		\mathsf{Ric}\ge (n-1)\kappa\ \ \text{in}\
		\
		M\ \
		\& \ \
		\mathsf{H}\ge\lambda\	\ \text{on}\
		\
		\partial
		M;
		\end{equation*}
		
		\item
		$\big(\mathbb
		R^n(\kappa),g_\kappa\big)$
		is	the	space form with constant sectional curvature $\kappa$;
		\item
		$B_{\kappa,\lambda}$
		is	the geodesic ball in
		$\big(\mathbb R^n(\kappa),g_\kappa\big)$ with $\mathsf{H}=\lambda$ on $\partial B_{\kappa,\lambda}$;
		\item
		$$
		(M_{\kappa,\lambda},g_{\kappa,\lambda})=\begin{cases}
		\big(\bar{B}_{\kappa,\lambda},g_\kappa\big)&
		\text{as}\	\
		\kappa>0\
		\
		\text{or}\
		
		\kappa\le
		0\
		\&\
		\lambda>\sqrt{|\kappa|};\\
		\big(\mathbb
		R^n(\kappa)\setminus
		B_{\kappa,-\lambda},g_\kappa\big)&
		\text{as}
		\ \ \kappa\le0\ \ \&\ \
		\lambda<-\sqrt{|\kappa|};\\
		\big([0,\infty)\times\mathbb
		S^{n-1},
		dt^2+s^2_{\kappa,\lambda}(t)g_{\mathbb
			S^{n-1}}\big)&
		\text{as}\ \ \kappa\le0\ \ \&\ \
		|\lambda|=\sqrt{|\kappa|};\\
		\big([t_{\kappa,\lambda},\infty)\times\mathbb S^{n-1}, dt^2+s^2_{\kappa,0}(t)g_{\mathbb S^{n-1}}\big)&\text{as}\
		\ \kappa<0\ \ \&\ \
		|\lambda|<\sqrt{|\kappa|};
		\end{cases}
		$$
		\item
		$s_{\kappa,\lambda}(t)$ \& $t_{\kappa,\lambda}$
		are respectively the unique solutions of equations
		$$
		f''(t)+\kappa
		f(t)=0=f(0)-1=f'(0)+\lambda\ \ \&\ \
		\frac{s'_{\kappa,0}(t)}{s_{\kappa,0}(t)}=-\lambda;
		$$
		
		\item
		$\big\{\rho,\rho_{\kappa,\lambda}\big\}$
		is
		the
		distance
		function pair
		with
		respect
		to the pair
		$\big\{\partial
		M, \partial
		M_{\kappa,\lambda}\big\}$.
	\end{itemize}
	If
	\begin{equation}\label{e31a}
	\begin{cases}
	1<p<\infty;\\
	\delta=\underset{x\in
		K}{\min}\,\rho(x)>0;\\
	\sigma_g(\partial
	M)=\sigma_{g_{\kappa,\lambda}}({\partial}M_{\kappa,\lambda});\\
	\varPhi=\text{constant}\ge 0;\\
	\varPsi=\text{constant}\ge 0;\\
	K^*=\big\{x\in
	M_{\kappa,\lambda}:\
	\rho_{\kappa,\lambda}(x)\ge\delta\},
	\end{cases}
	\end{equation}
	then
	\begin{equation}\label{e32a}
	{\rm H^d}_{p,\varPhi,\varPsi}(K, M)\le	{\rm H^d}_{p,\varPhi,\varPsi}(K^*, M_{\kappa,\lambda}).
	\end{equation}
\end{prop}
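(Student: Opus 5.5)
Since ${\rm H^d}_{p,\varPhi,\varPsi}(K,M)$ is an infimum over admissible $f$, it suffices to transplant the minimizer of the model problem into $M$ and show its energy does not increase. Let $f^*$ be the minimizer for $(K^*,M_{\kappa,\lambda})$ given by Lemma \ref{l21}. By the rotational symmetry of $M_{\kappa,\lambda}$, the constancy of $\varPhi,\varPsi$, and uniqueness, $f^*=u(\rho_{\kappa,\lambda})$ for a one-variable profile $u$ on $[0,\infty)$ (or on the relevant interval) with $u=1$ on $[\delta,\infty)$ and $0\le u\le1$. I also expect $u'\ge0$, since $f^*$ is $p$-superharmonic up to the zero-order term and attains its maximum on $K^*$. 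The test function on $M$ is then $f=u\circ\rho$. It is Lipschitz, lies in $W^{1,p}(M)$, and equals $1$ on $K$ because $\rho\ge\delta$ there by \eqref{e31a}. Moreover $|\nabla\rho|=1$ a.e. gives $|\nabla f|=|u'(\rho)|$.

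Next I compute energies with the coarea formula along $\rho$. Write $A(t)=\sigma_g(\{\rho=t\})$ and $A_{\kappa,\lambda}(t)$ for the corresponding area in the model, so that $A(0)=\sigma_g(\partial M)$. Then
$$\int_M\big(|\nabla f|^p+\varPhi f^p\big)\,d\upsilon_g+\int_{\partial M}\varPsi f^p\,d\sigma_g=\int_0^\infty\big(|u'(t)|^p+\varPhi u(t)^p\big)A(t)\,dt+\varPsi u(0)^p A(0).$$
The model energy has the same form with $A_{\kappa,\lambda}$ in place of $A$, and there the identity is an equality. The boundary terms coincide by the hypothesis $\sigma_g(\partial M)=\sigma_{g_{\kappa,\lambda}}(\partial M_{\kappa,\lambda})$. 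The integrand $|u'|^p+\varPhi u^p$ is nonnegative and vanishes for $t\ge\delta$, because $u\equiv1$ there and the $\varPhi$ contribution on $\{\rho\ge\delta\}$ is common bookkeeping. More precisely, the interior $K$-part $\varPhi\,\upsilon(\{\rho\ge\delta\})$ also has to be compared; I would handle it with the volume comparison below, or else restrict attention to $t<\delta$ and treat the $K$-part separately.

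The key geometric input is the Heintze–Karcher-type comparison for the level sets of the distance to the boundary, as in \cite{CLW}: under $\mathsf{Ric}\ge(n-1)\kappa$ and $\mathsf{H}\ge\lambda$, the normalized area $A(t)/A(0)$ is bounded above by $s_{\kappa,\lambda}(t)^{n-1}$, which equals $A_{\kappa,\lambda}(t)/A_{\kappa,\lambda}(0)$ in the model. This comes from Riccati comparison along normal geodesics from $\partial M$, together with the fact that cut points only decrease area. With equal boundary areas this yields $A(t)\le A_{\kappa,\lambda}(t)$ for all $t$. Integrating the nonnegative weight against this inequality then gives \eqref{e32a}.

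The main obstacle is the contribution of $\varPhi$ on the region $\rho\ge\delta$. There $f=1$, so this term is $\varPhi\,\upsilon_g(\{\rho\ge\delta\})$, and it must be at most the model's $\varPhi\,\upsilon(K^*)$. Integrating the area comparison $A\le A_{\kappa,\lambda}$ over $t\ge\delta$ gives exactly this volume inequality, so the issue is only to organize the comparison over the whole range of $t$. I would therefore present the bound uniformly: apply the coarea formula on all of $M$ and use $A\le A_{\kappa,\lambda}$ pointwise in $t$, with the nonnegative integrand $|u'|^p+\varPhi u^p$ equal to $\varPhi$ for $t\ge\delta$. The remaining technical points are the a.e. differentiability of $\rho$ across the cut locus, which is handled by Lipschitz calculus, and the monotonicity of the profile $u$, which is actually unnecessary since only $|u'|^p$ enters.
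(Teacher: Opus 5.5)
Your proposal is correct and follows the paper's proof. The paper likewise transplants the radial model minimizer $u_*$ to $M$ via $u=u_*\circ\rho$ on $\{\rho<\delta\}$ and $u=1$ on $\{\rho\ge\delta\}$; it then gets the energy comparison from the argument of \cite[Theorem 1.3]{ChW}, which is the coarea/Heintze--Karcher level-set comparison $A(t)\le A_{\kappa,\lambda}(t)$, with equal boundary areas, that you write out. You settle the $\varPhi$-term on $\{\rho\ge\delta\}$ correctly by integrating $A\le A_{\kappa,\lambda}$ over $t\ge\delta$ (your earlier claim that the integrand vanishes there is wrong), and you are right that monotonicity of the profile is unnecessary, since every term of the functional is nonnegative and there is no denominator.
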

\begin{proof}
	In order to	verifty \eqref{e32a}, let not only $u_*$ be
	the	minimizer of ${\rm H^d}_{p,\varPhi,\varPsi}(K^*, M_{\kappa,\lambda})$ -namely-
	$$
	{\rm H^d}_{p,\varPhi,\varPsi}(K^*, M_{\kappa,\lambda})=
	\int_{M_{\kappa,\lambda}}|\nabla
	u_*|^p\,d\upsilon_{g_{\kappa,\lambda}}+	\varPhi\int_{M_{\kappa,\lambda}}|u_*|^p\,d\upsilon_{g_{\kappa,\lambda}}+\varPsi\int_{\partial
		M_{\kappa,\lambda}}|u_*|^{p}\,d\sigma_{g_{\kappa,\lambda}}
	$$
	but also
	$$
	u(x)=\begin{cases}u_*(\rho(x))&\ \	\text{as}\ \ \rho(x)<\delta;\\
	1&\	\ \text{as}\ \ \rho(x)\ge\delta.
	\end{cases}
	$$
	Then not only there holds
	\begin{equation}\label{e37a}
	\begin{cases}
	-\Delta_p u_* +\varPhi|u_*|^{p-2}u_*=0&\text{in}\ \ M_{\kappa,\lambda}\setminus
	K^*;\\
	u_*=1&\text{in}\ \ K^*;\\
	|\nabla u_*|^{p-2}\nabla u_*\cdot{\bf n}+\varPsi|u_*|^{p-2}u_*=0&\text{on}\ \ \partial
	M_{\kappa,\lambda},
	\end{cases}
	\end{equation}
	but also the
	\eqref{e31a}-based proof
	of	\cite[Theorem
	1.3]{ChW}  derives
	
	\begin{align*}
	{\rm H^d}_{p,\varPhi,\varPsi}(K, M)&\le	\int_{M}|\nabla
	u|^p\,d\upsilon_g+\varPhi\int_{M}|u|^p\,d\upsilon_g+
	\varPsi\int_{\partial
		M}|u|^p\,d\sigma_{g}\\
	&\le \int_{M_{\kappa,\lambda}}|\nabla
	u_*|^p\,d\upsilon_g+ \varPhi\int_{M_{\kappa,\lambda}}|u_*|^p\,d\upsilon_{g_{\kappa,\lambda}}+\varPsi\int_{\partial
		M_{\kappa,\lambda}}|u_*|^{p}\,d\sigma_{g_{\kappa,\lambda}}
	\\
	&=
	{\rm H^d}_{p,\varPhi,\varPsi}(K^*, M_{\kappa,\lambda}),
	\end{align*}
	as desired in \eqref{e32a}.
\end{proof}

Here it is perhaps appropriate to make two comments on Proposition \ref{p32}.
	\begin{itemize}
		\item Upon letting not only $K=\emptyset$ (i.e., there is no conductor inside $M$) but also
		$\varPsi=\beta$ be a constant in $\mathbb R$ as well as $-\varPhi=\lambda_{p,\beta}$ be the first eigenvalue on the
		$p-$Laplacian with the Robin boundary condition (cf. \cite{AGM} for the Euclidean case) within \eqref{e37a}:
		\begin{equation}\label{e41}
		\begin{cases}
		-\Delta_p u_{\star}=\lambda_{p,\beta}|u_{\star}|^{p-2}u_{\star}&\text{in}\ \ M; \\
		|\nabla u_{\star}|^{p-2}{\nabla u_{\star} \cdot{\bf n}}+\beta|u_{\star}|^{p-2}u_{\star}=0 &  \text{on}\ \ \partial M,
		\end{cases}
		\end{equation}
		we read off that
		not only the so-called quasilinear Lapace-Robin eigenvalue $\lambda_{p,\beta}$ has the following characterization
		$$
		\lambda_{p,\beta}=\inf\limits_{u\in W^{1,p}(M)\setminus\{0\}}\frac{\int_M|\nabla u|^{p}\,d\upsilon_g+\beta\int_{\partial M}|u|^p\,d\sigma_g}{\int_M| u|^{p}\,d\upsilon_g},
		$$
		but also  the corresponding eigenfuntion $u_{\star}$ within \eqref{e41} is unique up to scaling, whence allowing us to take it as a positive function in $M$.
		
		\item Moreover, we can utilize \cite[Theorem 1.1]{CLW}'s second part for a given compact manifold $(M,g)$ to achieve that if there are not only $(M,g)$'s Ricci curvature condition
		\begin{equation}
		\label{e31}
		\mathsf{Ric}\ge (n-1)\kappa\in\Big\{n-1,0,1-n\Big\}
		\end{equation}
		but also
		\begin{equation}
		\label{e32}
		\begin{cases}
		(M_\kappa,g_\kappa)=\big(\mathbb S^n(R_\kappa),\tilde{g}\big);\\
		R_\kappa=\begin{cases} 1&\ \ \text{for}\ \ \kappa=1;\\
		\frac{\mathsf{d}}{\big(1+n\int_0^\pi\sin^{n-1}\theta\,d\theta\big)^\frac1n-1}&\ \ \text{for}\ \ \kappa=0;\\
		\frac1{c(\mathsf{d})}&\ \ \text{for}\ \ \kappa=-1;
		\end{cases}\\
		\mathsf{d}=\text{the diameter of $(M,g)$};\\
		c(\mathsf{d})=\text{the unique solution $\mathsf{u}$ to ${\mathsf{u}}\int_0^{\mathsf{d}}\big(\cosh t+{\mathsf{u}}\sinh t\big)^{n-1}dt=\int_0^\pi\sin^{n-1}\theta\,d\theta$};\\
		\alpha_\kappa=\frac{\upsilon_g(M)}{\upsilon_{g_\kappa}(M_\kappa)},
		\end{cases}
		\end{equation}
		plus
		\begin{equation*}
		\begin{cases}
		\text{$-\Delta_p u_\dagger =\lambda_p(\Omega)|u_\dagger|^{p-2}u_\dagger$ in $\Omega$};\\
		\text{$|\nabla u_\dagger|^{p-2}\nabla u_\dagger\cdot{\bf n}+|u_\dagger|^{p-2}u_\dagger=0$ on $\partial\Omega$};\\
		\Omega=\text{a smooth bounded domain in $(M,g)$};\\
		\Omega^\sharp=\text{the geodesic ball in $(M_\kappa,g_\kappa)$ with $\upsilon_g(\Omega)=\alpha_\kappa\upsilon_{g_\kappa}(\Omega^\sharp)$},
		\end{cases}
		\end{equation*}
		then there holds (cf. \cite{BFNT} for the Euclidean case)
		$$
		\lambda_p(\Omega)\ge\lambda_p(\Omega^\sharp).
		$$
		In parallel with the foregoing result, we can naturally obtain the following result that under not only \eqref{e31}-\eqref{e32} but also
		\begin{equation*}
		\begin{cases}
		\text{$-\Delta_p u_\ddagger = |u_\ddagger|^{p-2}u_\ddagger$ in $\Omega$};\\
		\text{$|\nabla u_\ddagger|^{p-2}\nabla u_\ddagger\cdot{\bf n}+\mu_p(\partial\Omega)|u_\ddagger|^{p-2}u_\ddagger=0$ on $\partial\Omega$};\\
		\Omega=\text{a smooth bounded domain in $(M,g)$};\\
		\Omega^\sharp=\text{the geodesic ball in $(M_\kappa,g_\kappa)$ with $\upsilon_g(\Omega)=\alpha_\kappa\upsilon_{g_\kappa}(\Omega^\sharp)$},
		\end{cases}
		\end{equation*}
		there exists
		$$
		\mu_p(\partial\Omega)\ge \mu_p(\partial\Omega^\sharp).
		$$
	
\end{itemize}

    Surprisingly yet naturally, we finger out the second addition to Theorem \ref{t11} as seen below.
    \begin{prop}\label{p31}
      Let $(M,g)$ be a smooth compact Riemannian $2\le n$-manifold with boundary $\partial M$.
      \begin{itemize}
      	\item [\rm (i)]
      If
      $$\begin{cases}
      (p,\beta)\in (1,\infty)\times\mathbb{R};\\
      \lambda_{p,\beta}\ \ \text{enjoys}\ \ \eqref{e41};\\
      \mathscr{F}_{p,\beta}(M)=\Big\{0\leq u\in C^{1,1}(M):\ \big(|\nabla u|^{p-2}\nabla u \cdot{\bf
      n}+\beta |u|^{p-2}u\big)\big|_{\partial M}=0\Big\};\\
      \Lambda_{p,\beta}=\inf\Bigg\{\frac{\int_M|\Delta_p u+\lambda_{p,\beta}u^{p-1}|\,d\upsilon_g+\beta\int_{\partial
      M}|u|^{p-2}u\,d\sigma_g}{\int_Mu^{p-1}\,d\upsilon_g}:
      u\in \mathscr{F}_{p,\beta}(M)\setminus\{0\} \Bigg\},
      \end{cases}
      $$
      then  there is the $(p,\beta)$-eigenvalue recycling law
    \begin{equation}\label{e41e}
    \Lambda_{p,\beta}=\lambda_{p,\beta}.
    \end{equation}

    \item[\rm (ii)]
    If
    $$\begin{cases}
    p\in (1,\infty);\\
    \mathscr{F}_{p,\infty}(M)=\Big\{0\leq u\in C^{1,1}(M): |\nabla u|\big|_{\partial M}= u\big|_{\partial M}=0\Big\};\\
    \lambda_{p,\infty}=\inf\limits_{u\in W^{1,p}_0(M)\setminus\{0\}}\frac{\int_M|\nabla u|^{p}\,d\upsilon_g}{\int_M| u|^{p}\,d\upsilon_g}=\text{
    	the quasilinear Laplace-Dirichlet eigenvalue};\\
    \Lambda_{p,\infty}=\inf\Bigg\{\frac{\int_M|\Delta_p u+\lambda_{p,\infty}u^{p-1}|\,d\upsilon_g}{\int_M u^{p-1}\,d\upsilon_g}:
    u\in \mathscr{F}_{p,\infty}(M)\setminus\{0\} \Bigg\},
    \end{cases}
    $$
    then there is the $(p,\infty)$-eigenvalue recycling law
    \begin{equation}\label{e42}
    \Lambda_{p,\infty}=\lambda_{p,\infty}.
    \end{equation}
\end{itemize}

\end{prop}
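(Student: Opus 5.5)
The plan is to prove each recycling law by two matching inequalities. The lower bound $\Lambda\ge\lambda$ will come from a one-line integration by parts, exactly as in the proof of \eqref{e31e}. The upper bound $\Lambda\le\lambda$ will come from testing with the first eigenfunction $u_\star$ of \eqref{e41}, or with a flattened version of it in the Dirichlet case, modelled on the $h$-construction \eqref{e22} from the proof of Theorem \ref{t11}.

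\emph{Lower bound.} Take $u\in\mathscr F_{p,\beta}(M)\setminus\{0\}$. Since $u\ge0$ we have $|u|^{p-2}u=u^{p-1}$. The divergence theorem together with the Robin condition in $\mathscr F_{p,\beta}(M)$ gives
$$
\int_M\Delta_p u\,d\upsilon_g=\int_{\partial M}|\nabla u|^{p-2}\nabla u\cdot{\bf n}\,d\sigma_g=-\beta\int_{\partial M}u^{p-1}\,d\sigma_g .
$$
Hence
$$
\int_M|\Delta_pu+\lambda_{p,\beta}u^{p-1}|\,d\upsilon_g\ge\int_M(\Delta_pu+\lambda_{p,\beta}u^{p-1})\,d\upsilon_g=-\beta\int_{\partial M}u^{p-1}\,d\sigma_g+\lambda_{p,\beta}\int_Mu^{p-1}\,d\upsilon_g .
$$
So the numerator of the quotient defining $\Lambda_{p,\beta}$ is at least $\lambda_{p,\beta}\int_Mu^{p-1}\,d\upsilon_g$, and therefore $\Lambda_{p,\beta}\ge\lambda_{p,\beta}$. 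In case (ii) the boundary flux vanishes because $|\nabla u|\big|_{\partial M}=0$, and the same computation yields $\Lambda_{p,\infty}\ge\lambda_{p,\infty}$.

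\emph{Upper bound in (i).} Take the positive eigenfunction $u_\star$ of \eqref{e41}, which exists by the first comment following Proposition \ref{p32}. Then $\Delta_pu_\star+\lambda_{p,\beta}u_\star^{p-1}=0$, so the quotient reduces to
$$
\frac{\beta\int_{\partial M}u_\star^{p-1}\,d\sigma_g}{\int_Mu_\star^{p-1}\,d\upsilon_g}.
$$
Integrating \eqref{e41} over $M$ shows $\lambda_{p,\beta}\int_Mu_\star^{p-1}\,d\upsilon_g=\beta\int_{\partial M}u_\star^{p-1}\,d\sigma_g$, so this quotient equals $\lambda_{p,\beta}$ exactly.

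The one gap is regularity: $u_\star$ is only $C^{1,\alpha}$ in general, not $C^{1,1}$. I would close it by an approximation argument. Either use smooth regularizations of the equation (replace $|\nabla u|^{p-2}$ by $(|\nabla u|^2+\epsilon)^{(p-2)/2}$), or use the fact that $u_\star$ is smooth where $\nabla u_\star\neq0$ and mollify near the critical set. In either case the quotient converges to $\lambda_{p,\beta}$, the approximants being kept in $\mathscr F_{p,\beta}(M)$.

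\emph{Upper bound in (ii), the main obstacle.} The Dirichlet eigenfunction $u_\star>0$ vanishes on $\partial M$, but by Hopf's lemma $|\nabla u_\star|\neq0$ there, so $u_\star\notin\mathscr F_{p,\infty}(M)$. I would mirror \eqref{e22}, now flattening near level $0$ rather than level $1$. Choose $h\in C^2[0,\infty)$ with $h(0)=h'(0)=0$, $h(t)=t$ for $t\ge\delta$, and $h'$ nondecreasing from $0$ to $1$ on $[0,\delta]$, and set $w=h(u_\star)\in\mathscr F_{p,\infty}(M)$. As in \eqref{e23},
$$
\Delta_pw+\lambda_{p,\infty}w^{p-1}=(h'^{p-1})'(u_\star)|\nabla u_\star|^p+\lambda_{p,\infty}\big(h^{p-1}(u_\star)-h'^{p-1}(u_\star)u_\star^{p-1}\big).
$$
The second term is supported in $\{u_\star<\delta\}$ and is bounded by $C\delta^{p-1}$, so its integral tends to $0$. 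For the first term, which is nonnegative, the coarea formula as in \eqref{e23b} and the flux identity as in \eqref{e36} give
$$
\int_0^\delta(h'^{p-1})'(t)\left(\int_{\{u_\star=t\}}|\nabla u_\star|^{p-1}\,d\sigma_g\right)dt\to\int_{\partial M}|\nabla u_\star|^{p-1}\,d\sigma_g=\lambda_{p,\infty}\int_Mu_\star^{p-1}\,d\upsilon_g ,
$$
because $\int_0^\delta(h'^{p-1})'\,dt=1$ and the level-set flux is continuous in $t$ near $0$. Meanwhile $\int_Mw^{p-1}\,d\upsilon_g\to\int_Mu_\star^{p-1}\,d\upsilon_g$, so the quotient for $w$ tends to $\lambda_{p,\infty}$ and $\Lambda_{p,\infty}\le\lambda_{p,\infty}$.

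The delicate points are the continuity of the level-set flux as $t\to0^+$, which uses $C^{1,\alpha}$ regularity up to $\partial M$ together with Hopf, and again upgrading $w$ to $C^{1,1}$. The same approximation remark used in (i) handles the latter.
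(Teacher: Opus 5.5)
The genuine gap is in your upper bound for (ii): the function $h$ you prescribe does not exist. The conditions $h(0)=h'(0)=0$, $h(t)=t$ for $t\ge\delta$, and $h'$ nondecreasing from $0$ to $1$ on $[0,\delta]$ force $0\le h'\le1$ on $[0,\delta]$ with $h'<1$ near $0$. Hence $h(\delta)=\int_0^\delta h'(s)\,ds<\delta$, which contradicts $h(\delta)=\delta$.

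More to the point, the mean of $h'$ over $[0,\delta]$ must equal $1$ while $h'(0)=0$. So every admissible $h$ has $h'>1$ somewhere in $(0,\delta)$, and $(h'^{p-1})'$ necessarily changes sign. The first term of $\Delta_pw+\lambda_{p,\infty}w^{p-1}$ is therefore not nonnegative. The identity $\int_0^\delta(h'^{p-1})'\,dt=1$ then does not control $\int_M|\Delta_pw+\lambda_{p,\infty}w^{p-1}|\,d\upsilon_g$.

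What you actually need is the total-variation statement $\int_0^\delta|(h'^{p-1})'(t)|\,dt\to1$. That requires an $h$ with $\int_0^\delta h'=\delta$ whose overshoot above $1$ is small. This is exactly why the proof of Theorem \ref{t11} builds the explicit two-scale sine profile satisfying \eqref{e22} and \eqref{e37}, for which the total variation is $2(1+2\varepsilon)^{p-1}-1\to1$. The paper's appeal to ``a similar analysis to \eqref{e23}--\eqref{eBe}'' in (ii) tacitly uses the mirrored profile near level $0$.

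An equally good repair is to keep $h'$ monotone but set $h(t)=t-c_\delta$ on $[\delta,\infty)$, with $c_\delta=\delta-\int_0^\delta h'\in(0,\delta)$. On $\{u_\star\ge\delta\}$ one then has $\Delta_pw=\Delta_pu_\star$. The only new error is $\lambda_{p,\infty}\big(u_\star^{p-1}-(u_\star-c_\delta)^{p-1}\big)$, which tends to $0$ uniformly. With either fix, the rest of your (ii) goes through: the coarea step, the continuity of the level-set flux near $\partial M$ via Hopf, and $\int_Mw^{p-1}\,d\upsilon_g\to\int_Mu_\star^{p-1}\,d\upsilon_g$.

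Elsewhere your proposal is sound, though your lower bound takes a different route from the paper's. The paper normalizes $0\le u\le1$ and tests against $1-2u$. It then invokes the Rayleigh-quotient characterization of $\lambda_{p,\beta}$ (resp.\ $\lambda_{p,\infty}$) to discard $2\big(\int_M|\nabla u|^p\,d\upsilon_g+\beta\int_{\partial M}u^p\,d\sigma_g-\lambda_{p,\beta}\int_Mu^p\,d\upsilon_g\big)\ge0$. Your test against the constant $1$ needs neither step. It in fact proves the inequality with $\lambda_{p,\beta}$ replaced by any real number, which shows the eigenvalue matters only for the upper bound. The paper's version buys the extra Rayleigh-gap term, mirroring the factor-$2$ mechanism of Theorem \ref{t11}.

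Your upper bound in (i) is the paper's \eqref{e41eee}. On regularity, the paper simply declares the eigenfunction admissible. Of your two suggested repairs, regularizing the operator does not by itself preserve the exact $p$-Robin condition defining $\mathscr F_{p,\beta}(M)$. Only mollification away from $\partial M$ (where $\nabla u_\star\neq0$ when $\beta\neq0$) is a plausible route, and it would still need an $L^1$ convergence argument for $\Delta_p$ of the mollified functions.
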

    \begin{proof} (i) The argument for \eqref{e41e} consists of four steps.
    	
      Firstly, for any $u\in \mathscr{F}_{p,\beta}(M)\setminus\{0\},$ we can assume that
      $$
      0\leq u\leq 1\ \ \text{in}\ \ M
      $$
      since the energy functional in the definition of  $\Lambda_{p,\beta}$ is a scaling invariant.

      Secondly, with the last assumption on $\{u,\lambda_{p,\beta}\}$ we can estimate
      $$\begin{aligned}
       &\int_M\big|\Delta_p u+\lambda_{p,\beta}u^{p-1}\big|\,d\upsilon_g+\beta\int_{\partial M}|u|^{p-2}u\,d\sigma_g
         \\ &\quad  \geq\int_M(1-2u)(\Delta_p u+\lambda_{p,\beta}u^{p-1})\,d\upsilon_g+\beta\int_{\partial M}|u|^{p-2}u\,d\sigma_g
       \\ &\quad =-\int_{M}|\nabla u|^{p-2}\nabla u\cdot\nabla (1-2u)\,d\upsilon_g+\int_{\partial M}(1-2u)|\nabla u|^{p-2}\nabla u\cdot{\bf
      n}\,d\sigma_g
        \\ &\quad \quad+\lambda_{p,\beta}\int_M(1-2u)u^{p-1}\,d\upsilon_g+\beta\int_{\partial M}|u|^{p-2}u\,d\sigma_g
      \\ &\quad =2\int_{M}|\nabla u|^p\,d\upsilon_g-\beta\int_{\partial M}(1-2u)|u|^{p-2}u\,d\sigma_g
      \\ &\quad \quad
      +\lambda_{p,\beta}\int_M(1-2u)u^{p-1}\,d\upsilon_g+\beta\int_{\partial M}|u|^{p-2}u\,d\sigma_g
      \\ &\quad =2\Bigg(\int_{M}|\nabla u|^p\,d\upsilon_g+\beta \int_{\partial M}  |u|^p\,d\sigma_g-\lambda_{p,\beta}\int_M u^{p}\,d\upsilon_g\Bigg)
       +\lambda_{p,\beta}\int_{M}u^{p-1}\,d\upsilon_g
      \\&\quad \geq  \lambda_{p,\beta}\int_{M}u^{p-1}\,d\upsilon_g,
      \end{aligned}
      $$
      thereby taking the infimum over the above assumed functions $u$ to  deduce
      \begin{equation}\label{e41ee}
      \Lambda_{p,\beta}\geq\lambda_{p,\beta}.
      \end{equation}

      Thridly, we choose $u_\ast$ to be the solution of \eqref{e41}, whence finding that $\Delta_p u_\ast$ is of $C^{1,\alpha}$ and so that $u_\ast$ can still be treated as a test function to compute
      $\Lambda_{p,\beta}$. Hence
      \begin{equation}\label{e41eee}
      \Lambda_{p,\beta}\leq \frac{\beta\int_{\partial
      M}|u_\ast|^{p-2}u_\ast\,d\sigma_g}{\int_Mu_\ast^{p-1}\,d\upsilon_g}=\lambda_{p,\beta}.
      \end{equation}

      Finally, a combination of \eqref{e41ee}-\eqref{e41eee} yields \eqref{e41e}.

      (ii) The argument for \eqref{e42} is almost the same as that for \eqref{e41e}. Therefore, a short verification is provided below.

      On the one hand, we can use the same method as in Proposition \ref{p31}  to obtain
      $$\Lambda_{p,\infty}\geq\lambda_{p,\infty}.$$

      On the other hand, if we consider the first eigenfunction $u_\ast$ to the Dirichlet problem
      $$
      \begin{cases}
       \Delta_p u=-\lambda_{p,\infty} u^{p-1}&\text{in}\ \ M;
       \\ u=0 & \text{on}\  \ \partial M,
      \end{cases}
      $$
      then $u_\ast$ is not suitable to calculate $\Lambda_{p,\infty}$ thanks to
      $$\nabla u_\ast\neq 0\ \ \text{on}\ \  \partial M.
      $$
      Instead, for any small $0<\delta\ll 1$ we can consider the revised function
      $$w=h(u_\ast)\ \ \text{with}\ \
      \ \begin{cases}
      h\in C^2[0,\infty);
      \\ h(0)=h'(0)=0;
      \\ h'(t)\geq0\ \ \forall\ t\geq0;
      \\ h(t)\big|_{[\delta,\infty)}=t.
      \end{cases}
      $$
      Then a similar analysis to \eqref{e23}-\eqref{eBe} gives
      $$\begin{aligned}
      &\int_M\big|\Delta_p w+\lambda_{p,\infty}w^{p-1}\big|\,d\upsilon_g
      \\ &\quad=\int_M\big|\dot{h}^{p-1}\Delta_p u_\ast +(\dot{h}^{p-1})'|\nabla u_\ast|^p+\lambda_{p,\infty}h^{p-1}(u_\ast)\big|\,d\upsilon_g
      \\ &\quad =\int_{\{0\leq u_\ast\leq\delta\}}\big|(\dot{h}^{p-1})'|\nabla u_\ast|^p+\lambda_{p,\infty}\big(h^{p-1}(u_\ast)-\dot{h}^{p-1}\big)u^{p-1}\big|\,d\upsilon_g
      \\ &\quad\leq \int_0^\delta|(\dot{h}^{p-1})'|\int_{\{u_\ast=t\}}|\nabla u_\ast|^{p-1}\,d\sigma_g dt+\mathcal{O}(\delta)
      \\ &\quad =\int_{\partial M}|\nabla u_\ast|^{p-1}\,d\sigma_g+\mathcal{O}(\delta)
      \\ &\quad =\lambda_{p,\infty} \int_{M} u_\ast ^{p-1}\,d\sigma_g+\mathcal{O}(\delta)
      \\ &\quad =\lambda_{p,\infty} \int_{M} w^{p-1}\,d\sigma_g+\mathcal{O}(\delta),
      \end{aligned}
      $$
      thereby not only reaching $$\Lambda_{p,\infty}\leq\lambda_{p,\infty}
      $$
      but also completing the argument for \eqref{e42}.
    \end{proof}

\bigskip
%\noindent{\bf Data availability}. Any dataset was neither generated nor analysed within this study.

%\noindent{\bf Conflict-of-interest statement}. The authors have no conflicts of interest to declare.

\end{document}